\newcommand{\myLines}[1]{
\begin{picture}(4,1)
\put(0,0){\line(2,1){2}}
\put(2,0){\vector(0,1){0.7}}
\put(4,0){\line(-2,1){2}}
\end{picture}}
\let\qdrawReal=\qdraw@branches
\newcommand\brOverride{\let\qdraw@branches=\myLines}
\newcommand\brRestore{\let\qdraw@branches=\qdrawReal}
\newcommand{\JL}{{\sf JL}}
\newcommand{\LP}{{\sf LP}}
\newcommand{\M}{{\mathcal M}}
\newcommand{\E}{{\mathcal E}}
\newcommand{\V}{{\mathcal V}}
\newcommand{\CS}{\mathcal{CS}}
 \def\r{\rightarrow}
\begin{document}
\title{Analytic Tableaux for Justification Logics}

\author{Meghdad Ghari}
\institute{School of Mathematics,
Institute for Research in Fundamental Sciences (IPM), \\ P.O.Box: 19395-5746, Tehran, Iran \\ \email{ghari@ipm.ir}
}

\maketitle
\begin{abstract}
In this paper we present  analytic tableau proof systems for various justification logics. We show that the tableau systems are sound and complete with respect to Mkrtychev models. In order to prove the completeness of the tableaux, we give a syntactic proof of cut elimination. We also show the subformula property for our tableaux, and prove the decidability of justification logics for finite constant specifications.  \\

{\bf Keywords}: Justification logics, Tableaux, Subformula property,   Analytic cut, Cut elimination, Decidability
\end{abstract}

\section{Introduction}

Justification logics are modal-like logics that provide a framework for reasoning about epistemic justifications (see \cite{A2008,ArtemovFitting,Fitting2008} for a survey). The language of justification logics extends the language of propositional logic by justification terms and expressions of the form $t:A$, with the intended meaning ``$t$ is a justification for $A$''.  The first logic in the family of justification logics, \emph{the Logic of Proofs} \LP, was introduced by  Artemov in \cite{A1995,A2001}. The logic of proofs is a counterpart of modal logic {\sf S4}. Other logics of this kind have been introduced so far (cf. \cite{KuznetsStuder2012}).  In this paper we deal only with those justification logics which are counterparts of normal modal logics between {\sf K} and {\sf S5}.

Various tableau proof systems have been developed for the logic of proofs (see \cite{Finger2010,Fitting2005,Ghari-tableaux-2016,Renne2004,Renne2006}). However, it seems that the only
analytic tableau proof system is Finger's prefixed KE tableaux  \cite{Finger2010}. Tableau proof systems for other justification logics can be found in \cite{Ghari-tableaux-2016}.

  The aim of this paper is to present analytic tableau proof systems for justification logics.  All tableau proof systems are sound and complete with respect to Mkrtychev models of justification logics. In order to prove the completeness of the tableaux, we give a syntactic proof of cut elimination. We  show that our tableau systems enjoy the subformula property, and we prove the decidability of justification logics for finite constant specifications.
 
\section{Justification logics}\label{sec:Justification logics}
The language of justification logics is an
extension of the language of propositional logic by the formulas
of the form $t:F$, where $F$ is a formula and $t$ is a
justification term. \textit{Justification terms} (or
\textit{terms} for short) are built up from (justification)
variables $x, y, z, \ldots$ and (justification) constants $a,b,c,\ldots$  using several operations depending on the logic: (binary) application `$\cdot$', (binary) sum `$+$', (unary) verifier `$!$', (unary) negative verifier `$?$', and (unary) weak negative verifier `$\bar{?}$'. Subterms of a term are defined in the usual way: $s$ is a subterm of $s,
s+t, t+s, s\cdot t$, $!s$, $\bar{?}s$, and  $?s$.

$\JL$-formulas are constructed from a countable set of propositional variables, denoted $\mathcal{P}$, by the following grammar:
\[ A::= p~|~\bot~|~\neg A~|~A\rightarrow A~|~t:A,\]
where $p\in\mathcal{P}$ and $t$ is a justification term. Other Boolean connectives are defined as usual.

For a $\JL$-formula $A$, the
set of all subformulas of $A$, denoted by $Sub(A)$, is defined inductively as follows:
$ Sub(p)=\{p\}$ where $p\in\mathcal{P}$;
$Sub(\bot)=\{\bot\}$; $Sub(A\r B)=\{A\r B\}\cup Sub(A)\cup
Sub(B)$; $Sub(t:A)=\{t:A\}\cup Sub(A)$. For a set $S$ of \JL-formulas, $Sub(S)$ denotes the set of all subformulas of the formulas from $S$.

We now begin with describing the axiom schemes and rules of the basic
justification logic {\sf J}, and continue with other justification
logics. The basic justification logic {\sf J} is the weakest
justification logic we shall be discussing. Other
justification logics are obtained by adding certain axiom schemes
to {\sf J}.
\begin{definition}\label{def: justification logics}
Axioms schemes of {\sf J} are:
\begin{description}
\item[Taut.] All propositional tautologies,
\item[Sum.] $s:A\rightarrow (s+t):A~,~s:A\rightarrow (t+s):A$,
\item[jK.] $s:(A\rightarrow B)\rightarrow(t:A\rightarrow (s\cdot t):B)$.
\end{description}
Other justification logics are obtained by adding the following axiom schemes to {\sf J} in various combinations:
\begin{description}
\item[jT.] $t:A\rightarrow A$.
\item[jD.] $t:\perp \rightarrow \perp$.
\item[j4.] $t:A\rightarrow !t:t:A$,
\item[jB.] $\neg A\rightarrow\bar{?} t:\neg t: A$.
\item[j5.] $\neg t:A\rightarrow ?t:\neg t:A$.
\end{description}
All justification logics have the inference rule Modus Ponens, and the \textit{Iterated Axiom Necessitation} rule:
\begin{description}
\item[IAN.]
$\vdash c_{i_n}:c_{i_{n-1}}:\ldots:c_{i_1}:A$, where $A$ is an axiom instance of the logic, $c_{i_j}$'s
are arbitrary justification constants and $n\geq 1$.
\end{description}
\end{definition}
In what follows, {\sf JL} denotes any of the justification logics defined in Definition \ref{def: justification logics}, unless stated otherwise. The language of each justification logic $\JL$ includes those operations on terms that are present in its axioms. $Tm_\JL$ and $Fm_\JL$ denote the set of all terms and the set of all formulas of $\JL$ respectively. Moreover, the name of each justification logic is indicated by the list of its axioms. For example, ${\sf JT4}$ is the extension of ${\sf J}$ by axioms jT and j4, in the language containing term operations $\cdot$, $+$, and $!$. {\sf JT4} is usually called the logic of proofs $\LP$.

\begin{definition}
A \textit{constant specification} $\CS$
for \JL~is a set of formulas of the form
$c_{i_n}:c_{i_{n-1}}:\ldots:c_{i_1}:A$, where $n\geq 1$, $c_{i_j}$'s are
justification constants and $A$ is an axiom instance of \JL, such that it is downward closed: if $c_{i_n}:c_{i_{n-1}}:\ldots:c_{i_1}:A\in\CS$, then $c_{i_{n-1}}:\ldots:c_{i_1}:A\in\CS$.
\end{definition}

The typical form of a formula in a constant specification for \JL~is $c:F$, where $c$ is a justification constant, and $F$ is either an axiom instance of \JL~or of the form $c_{i_m}:c_{i_{m-1}}:\ldots:c_{i_1}:A$, where $m\geq 1$, $c_{i_j}$'s are justification constants and $A$ is an axiom instance of \JL.

Let ${\sf JL}_\CS$ be the fragment of ${\sf JL}$ where the Iterated Axiom Necessitation rule only produces formulas from the given $\CS$.

In the remaining of this section, we recall the definitions of M-models for justification logics (see \cite{Mkrtychev1997, KuznetsStuder2012}).

\begin{definition}\label{Kripke-Fitting models J}
 An M-model $\M=(\E, \V)$ for justification
logic ${\sf J}_\CS$ consists of a valuation $\V:\mathcal{P} \rightarrow \{0,1\}$ and an admissible evidence function $\E:Tm_\JL \rightarrow 2^{Fm_\JL}$ meeting the following conditions:
\begin{description}
 \item[$\E 1.$]  $A\r B\in\E(s)$ and $A\in\E(t)$ implies $B\in\E(s\cdot t)$.
  \item[$\E 2.$]  $\E(s)\cup \E(t)\subseteq\E(s+t)$.
  \item[$\E 3.$]  $c:F\in\CS$ implies $F\in\E(c)$.
  \end{description}
  \end{definition}

Informally, $A \in \E(t)$ means ``term $t$ is an admissible evidence for formula $A$."

\begin{definition}\label{def:forcing relation}
For an M-model $\M=(\E, \V)$ the forcing relation $\Vdash$ is defined as follows:
\begin{enumerate}
\item $\M\not\Vdash \bot$,
\item $\M\Vdash p$ if{f} $\V(p)=1$, for $p\in\mathcal{P}$,
 \item $\M\Vdash \neg A$ if{f} $\M\not\Vdash A$,
  \item $\M\Vdash A\r B$ if{f} $\M\not\Vdash A$ or $\M\Vdash B$,
 \item  $\M\Vdash t:A$ if{f} $A\in\E(t)$.
\end{enumerate}
If $\M\Vdash F$ then it is said that $F$ is true in $\M$ or $\M$ satisfies $F$.
\end{definition}

In order to define M-models for other justification logics of Definition \ref{def: justification logics} certain
additional conditions should be imposed on the M-model.

\begin{definition}\label{Kripke-Fitting models JL}
 An M-model $\M=(\E, \V)$ for justification logic ${\sf JL}_\CS$ is an M-model for ${\sf J}_\CS$ such that:
\begin{itemize}
\item if $\JL$ contains axiom jT, then for all $t\in Tm_\JL$ and $A\in Fm_\JL$:
\begin{description}
   \item[$\E 4.$]  $A\in\E(t)$ implies $\M \Vdash A$.\vspace{0.05cm}
     \end{description}
\item if $\JL$ contains axiom jD, then for all $t\in Tm_\JL$:
\begin{description}
   \item[$\E 5.$]  $\bot\not\in\E(t)$.\vspace{0.05cm}
     \end{description}
\item if $\JL$ contains axiom j4, then for all $t\in Tm_\JL$ and $A\in Fm_\JL$:
\begin{description}
   \item[$\E 6.$]  $A\in\E(t)$ implies $t:A\in\E(!t)$.\vspace{0.05cm}
     \end{description}
\item if $\JL$ contains axiom jB, then for all $t\in Tm_\JL$ and $A\in Fm_\JL$:
\begin{description}
     \item[$\E 7.$]  $\M\not\Vdash A$ implies $\neg t:A\in\E(\bar{?}t)$.\vspace{0.05cm}
    \end{description}
\item if $\JL$ contains axiom j5, then for all $t\in Tm_\JL$ and $A\in Fm_\JL$:
\begin{description}
       \item [$\E 8.$] $A\not\in\E(t)$ implies $\neg t:A\in\E(?t)$.
         \end{description}
\end{itemize}
\end{definition}

By a $\JL_\CS$-model we mean an M-model for justification logic $\JL_\CS$. A \JL-formula $F$ is $\JL_\CS$-valid if it is true in every $\JL_\CS$-model. For a set $S$ of formulas, $\M\Vdash S$ provided that $\M\Vdash F$ for all formulas $F$ in $S$. Note that given a constant specification $\CS$ for \JL, and a model $\M$ of $\JL_\CS$ we have $\M\Vdash \CS$ (in this case it is said that $\M$ respects $\CS$).

The proof of soundness and completeness theorems for all justification logics of Definition \ref{def: justification logics} are given in \cite{KuznetsPhD2008,KuznetsStuder2012}.

\begin{theorem}\label{thm:Sound Compl JL}
Let \JL~be one of the justification logics of Definition \ref{def: justification logics}, and $\CS$ be a constant specification for \JL. Then \JL-formula $F$ is provable in  $\JL_\CS$ if{f} $F$ is $\JL_\CS$-valid.
\end{theorem}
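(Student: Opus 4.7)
The plan is to prove soundness by a routine induction on derivations in $\JL_\CS$, and completeness by a canonical M-model construction from a maximal consistent set.

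For soundness, fix a $\JL_\CS$-model $\M$ and induct on the length of a proof of $F$ in $\JL_\CS$. Propositional tautologies and Modus Ponens are immediate. For \textbf{jK}, if $\M\Vdash s:(A\r B)$ and $\M\Vdash t:A$ then $A\r B\in\E(s)$ and $A\in\E(t)$, so by $\E 1$ we have $B\in\E(s\cdot t)$, giving $\M\Vdash (s\cdot t):B$. The \textbf{Sum} axiom follows from $\E 2$, and the \textbf{IAN} rule follows from $\E 3$ together with the downward closure of $\CS$ (iterating $\E 3$ through the stack of constants). For the extensions, the axioms \textbf{jT}, \textbf{jD}, \textbf{j4}, \textbf{jB}, \textbf{j5} are validated by $\E 4$, $\E 5$, $\E 6$, $\E 7$, $\E 8$ respectively, using the forcing clause for $t:A$.

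For completeness, assume $F$ is not provable in $\JL_\CS$. Then $\{\neg F\}$ is $\JL_\CS$-consistent, so by a standard Lindenbaum construction it extends to a maximal $\JL_\CS$-consistent set $\Gamma$. I would define the canonical M-model $\M^c=(\E^c,\V^c)$ by $\V^c(p)=1$ iff $p\in\Gamma$, and
\[ \E^c(t)=\{A\in Fm_\JL : t:A\in\Gamma\}. \]
A truth lemma, by induction on formula structure, yields $\M^c\Vdash A$ iff $A\in\Gamma$; the clause $\M^c\Vdash t:A$ iff $A\in\E^c(t)$ iff $t:A\in\Gamma$ holds by definition, so the modal case is actually the easy one.

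The substantive step is checking that $\M^c$ is an M-model for $\JL_\CS$. Conditions $\E 1$ and $\E 2$ follow from axioms \textbf{jK} and \textbf{Sum} together with maximality, and $\E 3$ follows from \textbf{IAN} and the downward closure of $\CS$ (so every relevant $c:F$ lies in $\Gamma$). The remaining conditions require combining the truth lemma with the relevant axiom: for $\E 4$, if $A\in\E^c(t)$ then $t:A\in\Gamma$, so by \textbf{jT} and maximality $A\in\Gamma$, hence $\M^c\Vdash A$; for $\E 6$ use \textbf{j4}; for $\E 7$, if $\M^c\not\Vdash A$ then $\neg A\in\Gamma$, so by \textbf{jB} we get $\bar{?}t:\neg t:A\in\Gamma$, i.e., $\neg t:A\in\E^c(\bar{?}t)$; $\E 8$ is analogous using \textbf{j5}; $\E 5$ follows from \textbf{jD}. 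The main obstacle to watch is precisely this verification of the $\E$-conditions in the presence of the negative verifiers \textbf{jB} and \textbf{j5}, since these conditions mention $\M^c\not\Vdash A$ or $A\not\in\E^c(t)$ and must be traced back to membership in $\Gamma$ via the truth lemma before the relevant axiom can be invoked. Once $\M^c$ is confirmed to be a $\JL_\CS$-model, $\neg F\in\Gamma$ gives $\M^c\Vdash\neg F$, so $F$ is not $\JL_\CS$-valid, completing the contrapositive.
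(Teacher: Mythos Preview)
The paper does not actually prove this theorem: it is stated as background and the proof is delegated to \cite{KuznetsPhD2008,KuznetsStuder2012}. Your proposal reproduces precisely the standard argument found in those references---soundness by induction on derivations and completeness via a canonical M-model built from a maximal $\JL_\CS$-consistent extension of $\{\neg F\}$---and it is correct.

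Two small remarks. First, for the soundness of \textbf{IAN} you do not need downward closure or any iteration: a single application of $\E 3$ to the outermost constant already gives $c_{i_{n-1}}{:}\ldots{:}c_{i_1}{:}A\in\E(c_{i_n})$, which is exactly $\M\Vdash c_{i_n}{:}\ldots{:}c_{i_1}{:}A$. Second, your handling of the apparent circularity between the truth lemma and conditions $\E 4$, $\E 7$ is the right one: the forcing relation in Definition~\ref{def:forcing relation} is defined for an arbitrary pair $(\E,\V)$, independently of whether the $\E$-conditions hold, so the truth lemma can be established first and then invoked when verifying $\E 4$ and $\E 7$.
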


\section{Analytic tableaux}

In this section we present analytic tableaux for justification logics.  A restricted form of the cut rule, called the principle of bivalence in \cite{D’Agostino1992,D’Agostino1999,D’AgostinoMondadori1994} and denoted by $(PB)$, is used in the tableaux. In order to make the cut rule  analytic we put a restriction on its applications. Let us first extend the definition of subformulas of a  formula to include constant specifications.

\begin{definition}\label{def:CS-subformula}
Given a constant specification $\CS$, a formula $A$ is $\CS$-subformula of a formula $B$ if  $A$ is a subformula of a formula in $\{B\} \cup \CS$; i.e. $A \in Sub(\{B\} \cup \CS)$. 

 A formula $A$ is a weak $\CS$-subformula of a formula $B$ if  $A$ is either a subformula of a formula in $\{B\} \cup \CS$ or the negation of a subformula of a formula in $\{B\} \cup \CS$; i.e. $A \in Sub(\{B\} \cup \CS)$ or $A=\neg C$ and $C \in Sub(\{B\} \cup \CS)$.
\end{definition}

Tableau proofs will be of $\JL$-formulas but in addition to $\JL$-formulas will use formulas of the form $[t,A]$ or $\sim [t,A]$, where $A$ is a $\JL$-formula and $t$ is a term. The formulas $[t,A]$ and $\sim [t,A]$ are called  \textit{evidential formulas}. The intended meaning of $[t,A]$ is ``$t$ is an admissible evidence for $A$", and $\sim [t,A]$ is intended to be the negation of $[t,A]$.

A  ${\sf J}_\CS^e$-tableau for a $\JL$-formula is a binary tree with the negation of  that formula at the root constructed by applying  tableau rules from Table \ref{table:E-J}. For extensions of {\sf J}, tableau rules corresponding to axioms from Table \ref{table:E-tableau rules JL} should be added to ${\sf J}_\CS^e$-tableau rules. For example, the tableau proof system of the logic of proofs $\LP$ is obtained by adding the rules $(e)$ and $(!)$ to ${\sf J}_\CS^e$-tableau rules. 

For a justification logic $\JL_\CS$, a $\JL_\CS^e$-tableau branch closes if one of the following holds:

\begin{enumerate}
\item Both $A$ and $\neg A$ occurs in the branch, for some $\JL$-formula $A$.
\item Both $[t,A]$ and $\sim [t,A]$ occurs in the branch, for some term $t$ and $\JL$-formula $A$.
\item $\bot$ occurs  in the branch.
\item $\neg c:F$ occurs in the branch, for some $c:F\in\CS$.
\end{enumerate}

A $\JL_\CS^e$-tableau closes if all branches of the tableau close. A $\JL_\CS^e$-tableau proof for a formula $F$ is a closed tableau beginning with $\neg F$ (the root of the tableau) using only $\JL_\CS^e$-tableau rules. A $\JL_\CS^e$-tableau for a finite set $S$ of $\JL$-formulas begins with a single branch whose nodes consist of the formulas of $S$ as roots.

Note that in $\JL_\CS^e$-tableaux the rules $(\cdot)$, $(PB)$,  and $(PB_e)$ have restrictions on their applications (see Table \ref{table:E-J}). The formula $A$ in the conclusion of $(PB)$ is called the $PB$-formula. Furthermore, the rule $(\cdot)$ is a binary rule (it takes two formulas as input), and it should be read as follows: if a branch contains $s:(A\rightarrow B)$ and $t:A$, then we can extend that branch by adding $s\cdot t:B$, provided that the formula $A\rightarrow B$ is a $\CS$-subformula of the root of the tableau and $s \cdot t$ occurs in the root. In addition, there is no ordering intended on the inputs $s:(A\rightarrow B)$, $t:A$.

From Definition \ref{def:CS-subformula} it is obvious that the following is an instance of $(PB)$: 

\begin{prooftree}
\AXC{}\RightLabel{$(PB)$}
\UIC{$c_{i_n}:c_{i_{n-1}}:\ldots:c_{i_1}:A~|~\neg c_{i_n}:c_{i_{n-1}}:\ldots:c_{i_1}:A$}
\end{prooftree}

where $c_{i_n}:c_{i_{n-1}}:\ldots:c_{i_1}:A\in\CS$. 
Since the right fork is closed, it follows that the following rule is admissible in $\JL^e_\CS$: 

\begin{prooftree}
\AXC{}
\UIC{$c_{i_n}:c_{i_{n-1}}:\ldots:c_{i_1}:A$}
\end{prooftree}
where $c_{i_n}:c_{i_{n-1}}:\ldots:c_{i_1}:A\in\CS$.

\begin{example}
We give a ${\sf J}_\CS^e$-tableau proof of $x:A \r c \cdot x:(B\r A)$, where $\CS$ contains $c:(A\r (B\r A))$.

\vspace*{0.2cm}
\Tree [.$1.~\neg(x:A\r c\cdot x:(B\r A))$ [.$2.~x:A$ [.$3.~\neg c\cdot x:(B\r A)$ [.$4.~[x,A]$ [.$5.~\sim[c\cdot x,B\r A]$ [.$6.~c:(A\r (B\r A))$ [.$8.~[c,A\r (B\r A)]$ { $9.~[c\cdot x,B\r A]$  \\ $\otimes$} ] ] !\qsetw{5cm} {$7.~\neg c:(A\r (B\r A))$ \\ $\otimes$} ] ] ] ] ]
\vspace*{0.2cm}

Formulas 2 and 3 are from 1 by rule $(F\r)$, 4 is from 2 by rule $(Te)$, 5 is from 3 by rule $(Fe)$, 6 and 7 are obtained by $(PB)$, 8 from 6 by $(Te)$, and 9 from 2 and 8 by rule $(\cdot)$. Note that in the application of $(PB)$ the $PB$-formula $c:(A\r (B\r A))$ is a $\CS$-subformula of the root, and in the application of $(\cdot)$ the formula $A\r (B\r A)$ is a $\CS$-subformula of the root and $c \cdot x$ occurs in the root.
\end{example}

\begin{table} 
\centering\renewcommand{\arraystretch}{1.5}
\begin{tabular}{|cc|}
\hline
\multicolumn{2}{|c|}{
\AXC{}\noLine\UIC{$\neg\neg A$}\RightLabel{$(F\neg)$}
\UIC{$A$}\noLine
\UIC{}
\DP}
\\
\AXC{}\noLine 
\UIC{$\neg(A\rightarrow B)$}\RightLabel{$(F\r)$}
\UIC{$A$}\noLine
\UIC{$\neg B$}\noLine
\UIC{}
\DisplayProof
&
\AXC{$A\rightarrow B$}\RightLabel{$(T\r)$}
\UIC{$\neg A | B$}
\DP
\\\hline

 \AXC{}\noLine\UIC{$t:A$} \RightLabel{$(Te)$}
 \UIC{$[t,A]$}\noLine
\UIC{}
 \DP
 &
\AXC{}\noLine\UIC{$\neg t:A$} \RightLabel{$(Fe)$}
 \UIC{$ \sim [t,A]$}\noLine
\UIC{}
 \DP
 
\\\hline
\AXC{}\noLine\UIC{$\sim [t+s,A]$}\RightLabel{$(+_L)$}
\UIC{$\sim [t,A]$}
\DisplayProof
&
\AXC{}\noLine\UIC{$\sim [t+s,A]$}\RightLabel{$(+_R)$}
\UIC{$\sim [s,A]$}
\DisplayProof
\\
\multicolumn{2}{|c|}{
\AXC{}\noLine
\UIC{}\noLine
\UIC{$[s,(A\rightarrow B)]$}\noLine
\UIC{$[t,A]$}\RightLabel{$(\cdot)$}
\UIC{$[s\cdot t,B]$}\noLine
\UIC{}
\DisplayProof
}
\\\hline
\AXC{}\noLine 
\UIC{}\noLine 
\UIC{} 
\RightLabel{$(PB)$}
\UIC{$A~|~\neg A$}\noLine
\UIC{}
\DisplayProof 
&
\AXC{}\noLine 
\UIC{}\noLine 
\UIC{}  
\RightLabel{$(PB_e)$}
\UIC{$[t,A]~|~\sim [t,A]$}\noLine
\UIC{}
\DisplayProof
\\\hline
\multicolumn{2}{|l|}{In $(\cdot)$ the formula $A \r B$ is a $\CS$-subformula of the root and}\\\multicolumn{2}{|l|}{ the term $s \cdot t$ occurs in the root.}\\\hline
\multicolumn{2}{|l|}{In $(PB)$ the $PB$-formula $A$ is a $\CS$-subformula of the root.}\\\hline
\multicolumn{2}{|l|}{In $(PB_e)$ the formula $A$ is a $\CS$-subformula of the root and}\\\multicolumn{2}{|l|}{ the term $t$ occurs in the root.}\\\hline
\end{tabular}\vspace{0.3cm}
\caption{Tableau rules for basic justification logic {\sf J}.}\label{table:E-J}
\end{table}

\begin{table}
\centering\renewcommand{\arraystretch}{2}
\begin{tabular}{|l|c|}
\hline
 ~Justification axiom & Tableau rule~ \\\hline
 ~{\bf jT}. $t:A\r A$ &
 \AXC{}\noLine 
\UIC{$[t,A]$} \RightLabel{$(e)$}
 \UIC{$A$}\noLine
\UIC{}
 \DP \\\hline
 ~{\bf jD}. $t:\bot\r\bot$  &
 \AXC{}\noLine 
\UIC{$[t,\bot]$} \RightLabel{$(e_\bot)$}
 \UIC{$ \bot$}\noLine
\UIC{}
 \DP
  \\\hline
 ~{\bf j4}. $t:A\r !t:t:A$  &
 \AXC{}\noLine 
\UIC{$\sim [!t,t:A]$} \RightLabel{$(!)$}
 \UIC{$\sim [t,A]$}\noLine
\UIC{}
 \DP
 \\\hline
 ~{\bf jB}. $\neg A\r \bar{?}t:\neg t:A$ &
 \AXC{}\noLine 
\UIC{$\sim [\bar{?}t,\neg t:A]$} \RightLabel{$(\bar{?})$}
 \UIC{$A$}\noLine
\UIC{}
 \DP
  \\\hline
 ~{\bf j5}. $\neg t:A\r ?t:\neg t:A$ &
 \AXC{}\noLine 
\UIC{$\sim [?t,\neg t:A]$} \RightLabel{$(?)$}
 \UIC{$[t,A]$}\noLine
\UIC{}
 \DP
  \\
  \hline
\end{tabular}\vspace{0.3cm}
\caption{Justification axioms with corresponding
tableau rules.}\label{table:E-tableau rules JL}
\end{table}

\begin{definition}
An M-model $\M=(\E,\V)$ satisfies $[t,A]$ provided $A \in \E(t)$, and satisfies $\sim [t,A]$ provided $A \not\in \E(t)$.
\end{definition}

Soundness of $\JL_\CS^e$-tableau systems  is a consequence of the following lemma.

\begin{lemma}
Let $\pi$ be any branch of a $\JL^e_\CS$-tableau and $\M$ be a $\JL_\CS$-model that satisfies all the formulas occur in $\pi$. If a $\JL^e_\CS$-tableau rule is applied to $\pi$, then it produces at least one extension $\pi'$ such that $\M$ satisfies all the formulas occur in $\pi'$.
\end{lemma}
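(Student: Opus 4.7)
The plan is to argue by case analysis on which of the tableau rules of Table~\ref{table:E-J} and Table~\ref{table:E-tableau rules JL} is applied in the step from $\pi$ to its extension(s). In every case the premise formulas of the rule already appear in $\pi$ and are therefore, by assumption, satisfied by $\M$; I must show, for a one-branch rule, that $\M$ satisfies the conclusion, and, for a fork rule, that $\M$ satisfies every formula of at least one of the two resulting branches.

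For the purely propositional rules $(F\neg)$, $(F\r)$, and $(T\r)$, this is immediate from the clauses of Definition~\ref{def:forcing relation}. For $(Te)$ and $(Fe)$, the satisfaction clauses for $[t,A]$ and $\sim[t,A]$ introduced just before the lemma are set up so that $\M\Vdash t:A$ iff $\M$ satisfies $[t,A]$, and dually for the negated form. The rules $(+_L)$ and $(+_R)$ are the contrapositive of condition $\E 2$: from $A\notin\E(t+s)$ one gets both $A\notin\E(t)$ and $A\notin\E(s)$. The binary rule $(\cdot)$ is precisely condition $\E 1$. For the bivalence rules $(PB)$ and $(PB_e)$ the two branches are $A$ versus $\neg A$ and $[t,A]$ versus $\sim [t,A]$; classical bivalence on $\M$, respectively the dichotomy $A\in\E(t)$ or $A\notin\E(t)$, immediately makes one branch satisfied. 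The side conditions on $(\cdot)$, $(PB)$, and $(PB_e)$ play no role in this semantic argument; they are there for analyticity and termination, not for soundness.

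For the axiomatic rules of Table~\ref{table:E-tableau rules JL}, each is matched with one of the additional conditions of Definition~\ref{Kripke-Fitting models JL}: $(e)$ is $\E 4$ read off directly, while $(!)$, $(\bar{?})$, and $(?)$ are the contrapositives of $\E 6$, $\E 7$, and $\E 8$ respectively. The rule $(e_\bot)$ is handled vacuously: its premise $[t,\bot]$ would mean $\bot\in\E(t)$, which is forbidden by $\E 5$, so the assumption that $\M$ satisfies every formula in $\pi$ cannot hold in that case.

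I do not expect any genuine obstacle. The lemma is a mechanical run through the rules, and in each case the appropriate M-model condition from Definitions~\ref{Kripke-Fitting models J} and~\ref{Kripke-Fitting models JL} is, by design, exactly what the rule needs. The only subtlety worth flagging is that the three modal rules $(!)$, $(\bar{?})$, and $(?)$ act on $\sim[\cdot,\cdot]$-formulas, so the corresponding conditions $\E 6$, $\E 7$, $\E 8$ have to be invoked in contrapositive form; getting the directions right is the whole content of the case analysis.
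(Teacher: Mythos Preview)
Your case-by-case verification is correct and is precisely the standard argument; the paper in fact states this lemma without proof, treating it as routine. Each case matches the appropriate clause of Definitions~\ref{Kripke-Fitting models J}--\ref{Kripke-Fitting models JL} exactly as you describe, including the contrapositive readings of $\E 6$--$\E 8$ for $(!)$, $(\bar{?})$, $(?)$ and the vacuous treatment of $(e_\bot)$ via $\E 5$.
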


\begin{theorem}[Soundness]
If $A$ has a $\JL^e_\CS$-tableau proof, then it is $\JL_\CS$-valid.
\end{theorem}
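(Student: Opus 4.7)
The plan is to argue contrapositively and reduce to the preceding branch-preservation lemma. Suppose $A$ is not $\JL_\CS$-valid, so some $\JL_\CS$-model $\M = (\E, \V)$ satisfies $\neg A$. Consider any $\JL^e_\CS$-tableau $T$ for $A$, produced by some finite sequence of rule applications starting from the single-node tree whose root is $\neg A$. I would induct on the length of this sequence, maintaining the invariant that the current tableau contains at least one branch all of whose formulas are satisfied by $\M$. The base case is trivial because $\M \Vdash \neg A$; the inductive step is exactly the content of the preceding lemma, since a rule applied on the satisfied branch produces at least one still-satisfied extension, while a rule applied on any other branch leaves the satisfied branch untouched.

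At the end of the construction, $T$ therefore contains a branch $\pi^*$ whose formulas are all satisfied by $\M$. If $T$ were a closed tableau proof of $A$, then $\pi^*$ would also be closed, and I would derive a contradiction from each of the four closure conditions: the case that some $B$ and $\neg B$ both lie on $\pi^*$ contradicts the forcing clause for negation; the case $[t,B], \sim[t,B] \in \pi^*$ contradicts the fact that $B$ cannot both be and not be a member of $\E(t)$; the case $\bot \in \pi^*$ contradicts the first clause of the forcing relation; and the case $\neg c:F \in \pi^*$ with $c:F \in \CS$ contradicts the remark, recorded just after the definition of $\JL_\CS$-models, that every $\JL_\CS$-model respects $\CS$ and hence forces every formula in $\CS$. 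Each case is impossible, so no closed tableau proof of $A$ can exist, which is the desired contrapositive.

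Almost all of the real work is packaged into the preceding lemma, whose proof is a rule-by-rule verification over Tables~\ref{table:E-J} and~\ref{table:E-tableau rules JL}. The main obstacles sit there rather than in the theorem itself: the rules $(\cdot)$, $(!)$, $(\bar{?})$, $(?)$ and $(e_\bot)$ require conditions $\E 1$, $\E 6$, $\E 7$, $\E 8$ and $\E 5$ of the admissible-evidence function respectively, and $(e)$ uses $\E 4$; the analyticity side-conditions restricting $(\cdot)$, $(PB)$ and $(PB_e)$ to $\CS$-subformulas are not needed for soundness and can be ignored at this stage. In the theorem itself, the only mildly non-routine case is the fourth closure condition, whose justification goes through the $\CS$-respecting property of $\JL_\CS$-models rather than through any tableau rule.
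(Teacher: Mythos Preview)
Your argument is correct and follows exactly the route the paper intends: the paper does not spell out a proof of the Soundness theorem at all, merely stating that it ``is a consequence of the following lemma,'' and your contrapositive induction on the tableau construction together with the four-case closure analysis is the standard way to cash that remark out. Nothing is missing or misapplied.
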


In order to prove completeness we use the following cut rules

\[
\AXC{}\RightLabel{$(cut_f)$}
\UIC{$A~|~\neg A$}
\DisplayProof
\qquad
\AXC{}\RightLabel{$(cut_e)$}
\UIC{$[t,A]~|~\sim [t,A]$}
\DisplayProof
\]
Sometimes we denote the above cut rules with the following single rule
\[
\AXC{}\RightLabel{$(cut)$}
\UIC{$\varphi~|~- \varphi$}
\DisplayProof
\]
where $\varphi$ is an $\JL$-formula and $-$ is $\neg$ or it is an evidential formula and $-$ is $\sim$. Thus $(cut)$ denotes either $(cut_f)$ or $(cut_e)$.
The cut rule is the same as the rules $(PB)$ and $(PB_e)$ but without any restrictions on the cut-formula $\varphi$. Completeness is proved by first showing that all theorems of $\JL_\CS$ are provable in the tableau system $\JL_\CS^e + (cut)$, and then by proving the cut elimination theorem for $\JL_\CS^e + (cut)$.

\begin{theorem}\label{thm:completeness JL^e+cut}
If $A$ is provable in $\JL_\CS$, then it is provable in the tableau system $\JL_\CS^e + (cut)$.
\end{theorem}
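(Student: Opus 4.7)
The plan is to induct on the length of a Hilbert-style derivation of $A$ in $\JL_\CS$, handling each axiom scheme, Modus Ponens, and IAN separately. To make the Modus Ponens step go through, I would strengthen the induction hypothesis to: \emph{for every formula $F$ provable in $\JL_\CS$ and every finite set $S$ of $\JL$-formulas, $S \cup \{\neg F\}$ has a closed $\JL_\CS^e + (cut)$-tableau}. This is supported by a monotonicity lemma: if a finite set $S$ admits a closed tableau and $S \subseteq S'$, then so does $S'$, because the $\CS$-subformula restrictions on $(\cdot)$ and $(PB_e)$, as well as the ``term occurs in the root'' condition, are monotone in the initial set, and no closure condition is ever spoiled by enlarging the root.

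For the base cases I would exhibit explicit tableau proofs for each axiom scheme. Propositional tautologies fall under standard completeness of KE-style tableaux, using $(F\neg)$, $(F\to)$, $(T\to)$ together with the \emph{unrestricted} $(cut_f)$. The Sum axioms close after $(F\to)$, $(Te)$, $(Fe)$, and one application of $(+_L)$ or $(+_R)$, producing $[s,A]$ and $\sim[s,A]$ on the same branch. The axiom jK closes after $(F\to)$ (twice), $(Te)$, $(Fe)$, and one application of $(\cdot)$; the restriction on $(\cdot)$ is met because both $A\to B$ and the term $s\cdot t$ literally occur as subformula/subterm of the root $\neg(s{:}(A\to B) \to (t{:}A \to (s\cdot t){:}B))$. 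The remaining axioms jT, jD, j4, jB, j5 are dispatched using the corresponding evidential rules from Table~\ref{table:E-tableau rules JL} after the usual $(F\to)$, $(Te)$, $(Fe)$ preprocessing.

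The IAN case is immediate: if $c_{i_n}{:}\cdots{:}c_{i_1}{:}A \in \CS$ then the one-node tableau whose root is $\neg c_{i_n}{:}\cdots{:}c_{i_1}{:}A$ is already closed by closure condition~(4), and by the monotonicity lemma the same holds with any ambient set $S$ added.

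The Modus Ponens step is the main obstacle, and this is exactly where the unrestricted $(cut_f)$ is essential. Given the strengthened IH for $A$ and for $A\to B$, to show $S\cup\{\neg B\}$ admits a closed tableau I would apply $(cut_f)$ with cut-formula $A$: on the $\neg A$ branch invoke the IH for $A$ relative to the ambient set $S\cup\{\neg B\}$; on the $A$ branch apply $(cut_f)$ again with cut-formula $A\to B$. The resulting $\neg(A\to B)$ sub-branch is closed by the IH for $A\to B$ with ambient set $S\cup\{\neg B, A\}$, while the $A\to B$ sub-branch is handled by $(T\to)$, whose two children $\neg A$ and $B$ close against $A$ and $\neg B$ respectively. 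The monotonicity lemma guarantees that the subproofs supplied by the induction hypothesis remain valid $\JL_\CS^e + (cut)$-derivations once their initial sets are enlarged, so grafting them below the cut branches yields a closed tableau with initial set $S\cup\{\neg B\}$, completing the induction.
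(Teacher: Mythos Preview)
Your proposal is correct and follows essentially the same route as the paper: induction on the Hilbert derivation, direct tableau proofs for the axioms, the one-node closure for IAN, and the two nested applications of $(cut_f)$ for Modus Ponens with the subproofs for $B$ and $B\to A$ grafted onto the appropriate branches. The only differences are cosmetic: the paper asserts that all axiom schemes (including propositional tautologies) are provable already in $\JL_\CS^e$ without any use of $(PB)$, $(PB_e)$, or $(cut)$, whereas you allow yourself $(cut_f)$ there; and the paper leaves the monotonicity of closed tableaux implicit in the MP step rather than packaging it into a strengthened induction hypothesis as you do.
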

\begin{proof}
The proof is by induction on the proof of $A$ in $\JL_\CS$. It is a routine matter to check that all axioms of $\JL$ are provable in $\JL_\CS^e$, even without using $(PB)$, $(PB_e)$ and $(cut)$. If $A$ is obtained from $B$ and $B\r A$ by MP, then by the induction hypothesis there are closed $\JL_\CS^e$-tableaux $T_1$ and $T_2$ for $B$ and $B\r A$ respectively. Then, using the cut rule twice, the following is a closed tableau for $A$

\vspace*{0.2cm}
\Tree[.$\neg A$ [.$B$ [.$B\r A$ {$\neg B$ \\ $\otimes$} {$A$ \\ $\otimes$} !\qsetw{2cm} ] \qroof{$T_2$}.$\neg (B\r A)$ !\qsetw{3cm} ] \qroof{$T_1$}.$\neg B$ !\qsetw{2.5cm} ]
\vspace*{0.2cm}

Finally, if $A=c:F\in\CS$ is obtained by IAN, then by the closure condition $\neg c:F$ is a closed one-node tableau. \qed
\end{proof}

The proof of the cut elimination is similar to the algorithm given by Fitting in \cite{Fitting1996}, and thus the details will be omitted. The following definitions are inspired from those in \cite{Fitting1996}.

\begin{definition}
The rank of a term $t$ and a formula $A$, denoted by $r(t)$ and $r(A)$ respectively, is defined inductively as follows:
\begin{enumerate}
\item $r(x)=r(c)=0$, for justification variable $x$ and justification constant $c$,\\
$r(s+t) = r(s\cdot t)= r(s) + r(t) +1$, $r(!t)=r(\bar{?}t)=r(?t) = r(t) +1$.

\item $r(p)=r(\bot)=0$, for $p\in\mathcal{P}$, \\ $r(\neg A)= r(A) +1$, $r(A \r B) = r(A) + r(B) +1$, $r(t:A) = r(t) + r(A) +1$.
\end{enumerate}
The rank of evidential formulas are defined as follows: $r([t,A]) := r(t) + r(A)$.
\end{definition}

\begin{definition}
Suppose that in a tableau $T$ there is a cut to $\varphi$ and $- \varphi$ of the following form:

\vspace*{0.2cm}
\Tree [ \qroof{$T_1$}.$\varphi$ \qroof{$T_2$}.$-\varphi$ !{\qbalance} ]
\vspace*{0.2cm}

where $T_1$ and $T_2$ are the subtableaux below $\varphi$ and $- \varphi$, respectively. Let $|T|$ denote the number of formulas in the tableau $T$.
\begin{enumerate}
\item We say the cut is at a branch end if $|T_1|=0$ or $|T_2|=0$; that is, if either there are no formulas below $\varphi$, or there are no formulas below $- \varphi$, or both.

\item The rank of the cut is the rank of the cut-formula $\varphi$.

\item The weight of the cut is the number of formulas in $T$ strictly below $\varphi$ and $- \varphi$; that is, the weight of the cut is $|T_1| + |T_2|$.

\item The cut is called minimal if there are no cuts in the subtableaux $T_1$ and $T_2$.
\end{enumerate}

\end{definition}

The following fact will be used frequently in the proof of cut elimination (cf. \cite{Fitting1996}). Suppose that $T$ is a closed tableau for a finite set $S$ of formulas and $S \subseteq S'$, where $S'$ is also finite. Then there is a closed tableau for $S'$ with the same number of steps.

\begin{theorem}[Cut Elimination]\label{thm:Cut Elimination}
If a formula is provable in the tableau system $\JL_\CS^e+ (cut)$, then it is also provable in $\JL_\CS^e$.
\end{theorem}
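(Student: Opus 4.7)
The plan is to follow the standard Fitting-style cut elimination algorithm of \cite{Fitting1996}, proceeding by repeated reduction of a well-chosen cut. Given a closed $(\JL_\CS^e + (cut))$-tableau, I select a \emph{minimal} cut (one with no cuts in either subtableau $T_1$ or $T_2$) of \emph{maximum} rank, and show how to transform the tableau so that this cut is replaced by cuts of strictly smaller rank, preserving closure. The outer induction is on the rank of the maximum cut; the inner induction is on the weight $|T_1|+|T_2|$ of such a cut. Iterating drives both parameters to zero, at which point no cuts remain and the resulting tableau is a proof in $\JL_\CS^e$.

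First I would dispatch the branch-end cases. When $|T_1|=0$ or $|T_2|=0$, the side whose subtableau is empty must close at $\varphi$ or $-\varphi$; inspecting the four branch-closure conditions for $\JL_\CS^e$-tableaux shows that the complementary formula, or $\bot$, or a $\CS$-axiom-necessitation already appears higher in the branch, so the non-empty side alone closes the entire configuration and the cut can simply be deleted.

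For a minimal cut that is not at a branch end, I case-split on the outermost connective of the cut-formula $\varphi$. Cuts on $\neg A$, on $A\to B$, and on $t:A$ are reduced by the familiar permutation moves: apply $(F\neg)$, or $(F\to)$ together with $(T\to)$, or $(Te)$ together with $(Fe)$, to the relevant side, then re-cut on the immediate components (respectively on $A$; on $A$ and on $B$; on $[t,A]$). In each subcase the new cut-formulas have strictly smaller rank and the inner subtableaux reassemble into a closed configuration. Cuts on an evidential formula $[t,A]$ are handled by pushing the cut past the rules of Tables~\ref{table:E-J} and~\ref{table:E-tableau rules JL}: depending on the shape of $t$ and on whether $A$ is compound, one applies some combination of $(\cdot), (+_L), (+_R), (!), (\bar{?}), (?), (e)$, or $(e_\bot)$ and re-cuts on subterm- or subformula-components, again producing cuts of strictly smaller rank.

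The main obstacle is the atomic base of the rank induction, where $\varphi$ is a propositional variable, $\bot$, or an evidential formula whose constituents already have rank $0$. Here no further rank reduction is possible, so I would exploit the minimality of the cut: since no rule below $\varphi$ or $-\varphi$ in $T_1,T_2$ can take $\varphi$ (respectively $-\varphi$) as a principal formula producing strictly smaller conclusions, deleting the cut-formula from the roots of $T_1,T_2$ leaves closed subtableaux, and either one may replace the whole cut. The delicate point throughout the procedure is verifying that the $\CS$-subformula and term-occurrence restrictions on $(\cdot)$, $(PB)$, and $(PB_e)$ continue to hold after each rearrangement, which follows because all subformulas and subterms introduced at the reduction steps are inherited from formulas already present in the original tableau. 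Collecting the reductions for all cases and iterating yields the desired cut-free tableau in $\JL_\CS^e$.
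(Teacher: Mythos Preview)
Your outline follows the Fitting template in broad strokes, but it omits the one genuinely new ingredient that makes cut elimination go through for $\JL_\CS^e$: some cuts are not eliminated at all but are \emph{recognised} as instances of the restricted rules $(PB)$ or $(PB_e)$. Two places where this matters and your argument breaks: (i) In the branch-end case, if the empty fork closes by clause~4 (i.e.\ it is $\neg c{:}F$ with $c{:}F\in\CS$), nothing complementary need occur higher up in $\Theta$, so you cannot simply delete the cut as you claim; the paper instead observes that $c{:}F$ is trivially a $\CS$-subformula of the root, so this cut \emph{is} an application of $(PB)$ and may stay. (ii) When the cut formula is an evidential formula to which $(\cdot)$ is applied, e.g.\ a cut on $[t,A]$ with $[s,A\to B]\in\Theta$, the side conditions on $(\cdot)$ guarantee that $A$ is a $\CS$-subformula of the root and $t$ (a subterm of $s\cdot t$) occurs in the root, so again the cut is a legitimate $(PB_e)$ and is left in place. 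Your proposal never invokes this mechanism, and without it the rank-$0$ cases do not close: rules $(e)$ and $(e_\bot)$ still apply to rank-$0$ evidential formulas such as $[c,p]$ or $[c,\bot]$, so your claim that ``deleting the cut-formula from the roots of $T_1,T_2$ leaves closed subtableaux'' is false in general.

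There is a second structural gap. You case-split directly on the shape of the cut formula and assert that one can ``apply $(F\neg)$, or $(F\to)$ together with $(T\to)$, \ldots, then re-cut on the immediate components.'' But $T_1$ and $T_2$ need not begin with a rule acting on $\varphi$ or $-\varphi$; the first step below the cut may act on a formula from $\Theta$ (including an application of $(PB)$ or $(PB_e)$). The paper handles this as a separate Case~II, permuting the cut below that step to obtain a cut of the \emph{same} rank but strictly smaller weight; only when both $T_1$ and $T_2$ start by decomposing the cut formula (Case~III) does the rank drop, and even there the paper's transformations introduce auxiliary cuts of equal rank and smaller weight alongside the lower-rank ones. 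Your plan, which promises ``cuts of strictly smaller rank'' at every non-branch-end step, does not account for this and so does not terminate as stated.
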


\begin{proof}
We will show how to eliminate the minimal cuts from a tableau $T$. Suppose $T$ consists a minimal cut of the following form:

\vspace*{0.2cm}
\Tree [.$\Theta$ \qroof{$T_1$}.$\varphi$ $(cut)$ \qroof{$T_2$}.$-\varphi$ !\qsetw{1cm} !{\brOverride} ]
\vspace*{0.2cm}

The proof is by induction on the rank of the cut-formula $\varphi$ with subinduction on the weight of the cut. Similar to the cut elimination of the sequent calculus of classical logic (cf. \cite{TS}), we distinguish three cases:

\begin{description}
\item[Case I.] The minimal cut is at a branch end.

\item[Case II.] The minimal cut is not at a branch end, and the uppermost formulas in $T_1$ or $T_2$ are obtained by applying a tableau rule to a formula from $\Theta$.

\item[Case III.] The minimal cut is not at a branch end, and the uppermost formulas in $T_1$ and $T_2$ are obtained by applying tableau rules to $\varphi$ and $- \varphi$, respectively.
\end{description}

In case I, we eliminate the minimal cut. In cases II and III, we transform the tableau $T$ into another closed tableau in which the minimal cut is replaced by cuts of lower rank, by cuts of the same rank but of lower weight, or both.

{\bf Case I.}
Suppose we have a minimal cut at the end of a branch. We only consider two cases   (see \cite{Fitting1996} for other cases). Consider the following cut to formulas $[t,A]$ and $\sim [t,A]$.

\vspace*{0.2cm} 
\Tree [.$\Theta$ \qroof{$T'$}.$[t,A]$ {$\sim [t,A]$ \\ $\otimes$} !{\qbalance} ]
\vspace*{0.2cm}

Suppose $\sim [t,A]$ plays a role in the closure of the right fork, otherwise the cut can be eliminated. Hence $[t,A]$ must occur in $\Theta$. Thus the cut adds a redundant formula to the left fork. Hence if the cut is eliminated, we still have closure. The case in which the left fork is closed is similar.

Let us also consider the case in which the branch closes because of $\neg c:F$, where $c:F \in \CS$. In this case the cut looks like this.

\vspace*{0.2cm}
\Tree [.$\Theta$ \qroof{$T'$}.$c:F$ {$\neg c:F$ \\ $\otimes$} !{\qbalance} ]
\vspace*{0.2cm}

Since $c:F \in \CS$, the cut-formula $c:F$ is a $\CS$-subformula of the root, and hence the cut is an instance of $(PB)$.

{\bf Case II.}
Suppose the minimal cut is not at a branch end, and the uppermost formulas in $T_1$ or $T_2$ are obtained by applying a tableau rule to formulas from $\Theta$. In this case we push the cut down in the tableau and obtain a new cut of lower weight. We only consider two cases: (i) the rule $(\cdot)$ is applied to formulas from $\Theta$, and (ii) the rule $(PB)$ is applied. Other cases are similar.

Suppose the rule $(\cdot)$ is applied to formulas from $\Theta$. Thus the cut is of the form shown in (1), where $A \r B$ is a $\CS$-subformula of the root and $s \cdot t$ occurs in the root. The displayed cut in (1) is transformed into the one in (2) of lower weight.

(1)
\vspace*{0.2cm}
\Tree [.{$\vdots$ \\ $[s,A \r B]$ \\ $[t,A]$ \\ $\vdots$} \qroof{$T_1$}.{$\varphi$ \\ $[s \cdot t,B]$} \qroof{$T_2$}.$-\varphi$ !{\qbalance} ]
\hskip 1.5cm
(2)
\Tree [.{$\vdots$ \\ $[s,A \r B]$ \\ $[t,A]$ \\ $\vdots$ \\$[s \cdot t,B]$} \qroof{$T_1$}.$\varphi$ \qroof{$T_2$}.$-\varphi$ !{\qbalance} ]
\vspace*{0.1cm}

Now suppose the rule $(PB)$ is applied. Then the cut is of the form shown in (3), where $A$ is a $\CS$-subformula of the root. The displayed cut in (3) is transformed into the cuts in (4) of lower weights.

\vspace*{0.2cm}
(3)
\renewcommand{\qroofpadding}{0.6em}
\Tree [.$\Theta$
[.$\varphi$ \qroof{$T_1^L$}.$A$ !\qsetw{1cm} $(PB)$ \qroof{$T_1^R$}.$\neg A$ !{\brOverride} ].$\varphi$ !{\brRestore} !\qsetw{4cm}
\qroof{$T_2$}.$-\varphi$ ] 
\hskip 1.5cm
(4)
\Tree [.$\Theta$
[.$A$ \qroof{$T_1^L$}.$\varphi$ !\qsetw{1.5cm} \qroof{$T_2$}.$-\varphi$ ] 
$(PB)$
[.$\neg A$ \qroof{$T_1^R$}.$\varphi$ !\qsetw{1.5cm} \qroof{$T_2$}.$-\varphi$ ] !{\brOverride} ]
\vspace*{0.1cm}

{\bf Case III.}
Suppose the minimal cut is not at a branch end, and the uppermost formulas in $T_1$ and $T_2$ are obtained by applying tableau rules to $\varphi$ and $- \varphi$, respectively. In this case we transform the cut into cuts of lower rank, or into cuts with the same rank but of lower weight. 

First consider the rule $(\cdot)$ is applied to the cut formula.

\vspace*{0.2cm}
(5)
\Tree [.{$\Theta$ \\ $[s,A\r B]$} 
[.$[t,A]$ \qroof{$T_1$}.$[s\cdot t,B]$ ]
$(PB_e)$
\qroof{$T_2$}.$\sim[t,A]$  !{\qbalance} !{\brOverride} ].{$\Theta$ \\ $[s,A\r B]$}
\hskip 0.5cm
(6)
\Tree [.{$\Theta$ \\ $[t,A]$} 
[.$[s,A\r B]$ \qroof{$T_1$}.$[s\cdot t,B]$ ]
$(PB_e)$
\qroof{$T_2$}.$\sim[s,A\r B]$ !{\qbalance} !{\brOverride} ].{$\Theta$ \\ $[t,A]$}
\vspace*{0.2cm}

Since $A \r B$ is a $\CS$-subformula of the root and $s \cdot t$ occurs in the root, the two cuts shown in (5) and (6) are instances of $(PB_e)$.
For example, the following cut is an instance of $(PB_e)$.

\vspace*{0.2cm}
\Tree [.{$\Theta$ \\ $[s',A\r B]$}
[.$[t+s,A]$ \qroof{$T_1$}.$[s'\cdot(t+s),B]$ ]
$(PB_e)$
[.$\sim[t+s,A]$ \qroof{$T_2$}.$\sim[t,A]$ ] !{\qbalance} !{\brOverride} ].{$\Theta$ \\ $[s',A\r B]$}
\vspace*{0.2cm}

Now consider the following cut to formulas $t:A$ and $\neg t:A$ to which the rules $(Te)$ and $(Fe)$ are applied respectively.

\vspace*{0.2cm}
\Tree [.$\Theta$
[.$t:A$ \qroof{$T_1$}.$[t,A]$ ]
$(cut)$
[.$\neg t:A$ \qroof{$T_2$}.$\sim [t,A]$ ] !{\qbalance} !{\brOverride} ].$\Theta$
\vspace*{0.2cm}

This cut is transformed into the following cuts.

\vspace*{0.2cm}
\Tree [.$\Theta$
[.$[t,A]$ \qroof{$T_1$}.$t:A$ $(cut)_2$ !\qsetw{2cm}
 [.$\neg t:A$ {$\sim [t,A]$ \\ $\otimes$} ] !{\brOverride} ].$[t,A]$ !{\brRestore}
$(cut)_1$
[.$\sim [t,A]$  [.$t:A$ {$[t,A]$ \\ $\otimes$} ] $(cut)_3$ !\qsetw{2cm} \qroof{$T_1$}.$\neg t:A$ !{\brOverride} ].$\sim [t,A]$  !{\brRestore} !{\qbalance} !{\brOverride} ].$\Theta$

The rank of $(cut)_1$ is less than the rank of $(cut)$. Moreover, $(cut)_2$ and $(cut)_3$ have the same rank as $(cut)$ but their weights are smaller than the weight of $(cut)$.

Consider the following cut to formulas $\neg t:A$ and $\neg\neg t:A$ to which the rules $(Fe)$ and $(F\neg)$ are applied respectively.

\vspace*{0.2cm}
\Tree [.$\Theta$
 [.$\neg t:A$ \qroof{$T_1$}.$\sim [t,A]$ ]
 $(cut)$
 [.$\neg\neg t:A$ \qroof{$T_2$}.$t:A$ ] !{\qbalance}  !{\brOverride} ].$\Theta$
\vspace*{0.2cm}

This cut is transformed into the following cuts.

\vspace*{0.2cm}
\Tree [.$\Theta$
  [.$t:A$ 
  [.$[t,A]$ {$\neg t:A$  \\ $\otimes$}   $(cut)_4$ !\qsetw{1.2cm} \qroof{$T_2$}.$\neg\neg t:A$ !{\brOverride} ].$[t,A]$ !{\brRestore} !\qsetw{2.5cm}
   $(cut)_2$
     [.$\sim [t,A]$ {$[t,A]$ \\ $\otimes$} ] !{\brOverride} ].$t:A$ !{\brRestore} !\qsetw{0.7cm}
    $(cut)_1$ 
    !\qsetw{0.7cm}
   [.$\neg t:A$ [.$[t,A]$  {$\sim[t,A]$ \\ $\otimes$} ]   !\qsetw{2.5cm}
    $(cut)_3$
     \qroof{$T_1$}.$\sim[t,A]$ !{\brOverride} ].$\neg t:A$ !{\brRestore} !{\brOverride}
  ].$\Theta$ 
\vspace*{0.2cm}

The ranks of $(cut)_1$, $(cut)_2$,  and $(cut)_3$ are less than the rank of $(cut)$. Moreover, $(cut)_4$ has the same rank as $(cut)$ but its weight is smaller than the weight of $(cut)$.

Now suppose that jT is an axiom of $\JL$. Consider the following cut to formulas $[t+s,A]$ and $\sim [t+s,A]$ to which the rules $(e)$ and $(+_L)$ are applied respectively.

\vspace*{0.2cm}
\Tree [.$\Theta$
[.$[t+s,A]$ \qroof{$T_1$}.$A$ ]
$(cut)$
[.$\sim [t+s,A]$ \qroof{$T_2$}.$\sim [t,A]$ ] !{\qbalance} !{\brOverride} ].$\Theta$
\vspace*{0.2cm}

This cut is transformed into the following cuts.

\vspace*{0.2cm}
\Tree [.$\Theta$
[.$[t,A]$ [.$A$ \qroof{$T_1$}.$[t+s,A]$ 
$(cut)_4$
 [.$\sim [t+s,A]$ {$\sim [t,A]$ \\ $\otimes$} ] !{\brOverride} ].$A$ !{\brRestore} !\qsetw{2.5cm}
$(cut)_2$
[.$\neg A$ {$A$ \\ $\otimes$} ] !{\brOverride} ].$[t,A]$ !\qsetw{4.5cm}
$(cut)_1$
[.$\sim [t,A]$ [.$[t+s,A]$ \qroof{$T_1$}.$A$ !{\brRestore} ].$[t+s,A]$ !\qsetw{2.5cm}
$(cut)_3$
\qroof{$T_2$}.$\sim [t+s,A]$ !{\brOverride} ].$\sim [t,A]$ !{\brOverride} ].$\Theta$
\vspace*{0.2cm}

The ranks of $(cut)_1$ and $(cut)_2$ are less than the rank of $(cut)$. Moreover, $(cut)_3$ and $(cut)_4$ have the same rank as $(cut)$ but their weights are smaller than the weight of $(cut)$. The case in which rule $(+_R)$ is applied instead of rule $(+_L)$ is similar.

Consider the following cut to formulas $[!t,t:A]$ and $\sim [!t,t:A]$ to which the rules $(e)$ and $(!)$ are applied respectively.

\vspace*{0.2cm}
\Tree [.$\Theta$
[.$[!t,t:A]$ \qroof{$T_1$}.$t:A$ ]
$(cut)$
[.$\sim [!t,t:A]$ \qroof{$T_2$}.$\sim [t,A]$ ] !{\qbalance} !{\brOverride} ].$\Theta$
\vspace*{0.2cm}

This cut is transformed into the following cuts.

\vspace*{0.2cm}
\Tree [.$\Theta$
[.$[t,A]$ 
[.$t:A$ 
\qroof{$T_1$}.$[!t,t:A]$
$(cut)_4$
[.$\sim [!t,t:A]$ {$\sim[t,A]$ \\ $\otimes$} ]
!{\brOverride} ].$t:A$ !{\brRestore}
!\qsetw{2cm}
$(cut)_2$
[.$\neg t:A$ {$\sim [t,A]$ \\ $\otimes$} ]
!{\brOverride} ].$[t,A]$ !{\brRestore}
!\qsetw{0.4cm}
$(cut)_1$
!\qsetw{0.4cm}
[.$\sim [t,A]$ 
[.$t:A$ {$[t,A]$ \\ $\otimes$} ].$t:A$
!\qsetw{0.5cm}
$(cut)_3$
!\qsetw{0.5cm}
[.$\neg t:A$
[.$[!t,t:A]$ {$t:A$ \\ $\otimes$} ]
$(cut)_5$
\qroof{$T_2$}.$\sim [!t,t:A]$
!{\brOverride} ].$\neg t:A$
!{\brOverride} ].$\sim [t,A]$ !{\brRestore}
!{\brOverride} ].$\Theta$  
\vspace*{0.2cm}

The ranks of $(cut)_1$, $(cut)_2$, and $(cut)_3$ are less than the rank of $(cut)$. Moreover, $(cut)_4$ and $(cut)_5$ have the same rank as $(cut)$ but their weights are smaller than the weight of $(cut)$.
The cut to formulas $[?t,\neg t:A]$ and $\sim [?t,\neg t:A]$ to which the rules $(e)$ and $(?)$ are applied respectively is treated similarly.

Consider the following cut to formulas $[\bar{?}t,\neg t:A]$ and $\sim [\bar{?}t,\neg t:A]$ to which the rules $(e)$ and $(\bar{?})$ are applied respectively.

\vspace*{0.2cm}
\Tree [.$\Theta$
[.$[\bar{?}t,\neg t:A]$ \qroof{$T_1$}.$\neg t:A$ ]
$(cut)$
[.$\sim [\bar{?}t,\neg t:A]$ \qroof{$T_2$}.$A$ ] !{\qbalance} !{\brOverride} ].$\Theta$
\vspace*{0.2cm}

This cut is transformed into the following cuts.

\vspace*{0.2cm}
\Tree [.$\Theta$
[.$t:A$ 
[.$A$ 
[.$[\bar{?}t,\neg t:A]$ {$\neg t:A$ \\ $\otimes$} ] !\qsetw{1.8cm} 
$(cut)_4$ !\qsetw{1.2cm} 
\qroof{$T_2$}.$\sim [\bar{?}t,\neg t:A]$ !{\brOverride} ].$A$ !{\brRestore} !\qsetw{3.1cm} 
$(cut)_2$ !\qsetw{0.1cm} 
[.$\neg A$ {$[t,A]$ \\ $A$ \\ $\otimes$} ] !{\brOverride} ].$t:A$ 
$(cut)_1$ 
\qroof{$T_3$}.$\neg t:A$ 
 !{\brOverride} ].$\Theta$
\vspace*{0.2cm}

In which the subtableau $T_3$ is as follows.

\Tree[.$\neg t:A$ 
[.$A$  
\qroof{$T_1$}.$[\bar{?}t,\neg t:A]$  !\qsetw{1.8cm} 
$(cut)_5$ !\qsetw{1.2cm} 
\qroof{$T_2$}.$\sim [\bar{?}t,\neg t:A]$ 
!{\brOverride} ].$A$ !{\brRestore}
$(cut)_3$
[.$\neg A$  
\qroof{$T_1$}.$[\bar{?}t,\neg t:A]$ !\qsetw{1.8cm} 
$(cut)_6$ !\qsetw{1.2cm} 
[.$\sim [\bar{?}t,\neg t:A]$ {$A$ \\ $\otimes$} ] 
!{\brOverride} ].$\neg A$ !{\brRestore}
!{\brOverride} ].$\neg t:A$

The ranks of $(cut)_1$, $(cut)_2$,  and $(cut)_3$ are less than the rank of $(cut)$. Moreover, $(cut)_4$, $(cut)_5$ and $(cut)_6$ have the same rank as $(cut)$ but their weights are smaller than the weight of $(cut)$.

Consider the cut to formulas $[t+s,\bot]$ and $\sim [t+s,\bot]$ shown in (7) to which the rules $(e_\bot)$ and $(+_L)$ are applied respectively.

\vspace*{0.2cm}
(7)
\Tree [.$\Theta$ [.$[t+s,\bot]$ {$\bot$ \\ $\otimes$} ] [.$\sim [t+s,\bot]$ \qroof{$T'$}.$\sim [t,\bot]$ ] !{\qbalance} ]
\hskip 1.5cm
(8)
\Tree [.$\Theta$ [.$[t,\bot]$ {$\bot$ \\ $\otimes$} ]
[.$\sim [t,\bot]$ [.$[t+s,\bot]$ {$\bot$ \\ $\otimes$} ] \qroof{$T'$}.$\sim [t+s,\bot]$ !\qsetw{2.5cm} ] !\qsetw{4.5cm} ]
\vspace*{0.2cm}

The cut in (7) is transformed into the cuts shown in (8), in which the cut to $[t,\bot]$ and $\sim [t,\bot]$ has a lower rank, and the weight of the cut to $[t+s,\bot]$ and $\sim [t+s,\bot]$ is smaller than the weight of the original cut in (7). The case in which  $(+_R)$ is applied instead of $(+_L)$ is treated in a similar way.

Actually there are two remaining cuts to verify in  case III: the cut to formulas $A \r B$ and $\neg (A \r B)$ to which the rules $(T\r)$ and $(F\r)$ are applied respectively, and the cut to formulas $\neg (A \r B)$ and $\neg\neg (A \r B)$ to which the rules $(F\r)$ and $(F\neg)$ are applied respectively. We refer the reader to \cite{Fitting1996} for a more detailed exposition of these two cuts. \qed
\end{proof}

\begin{theorem}[Completeness]\label{thm:completeness E-tableaux}
If $A$ is $\JL_\CS$-valid, then it has a $\JL^e_\CS$-tableau proof.
\end{theorem}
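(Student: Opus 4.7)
The plan is to obtain completeness as an immediate corollary by chaining together three results already in hand: the axiomatic completeness of $\JL_\CS$ with respect to M-models (Theorem \ref{thm:Sound Compl JL}), the simulation of Hilbert proofs by tableau proofs with cut (Theorem \ref{thm:completeness JL^e+cut}), and the cut elimination theorem (Theorem \ref{thm:Cut Elimination}). No new semantic or combinatorial argument is needed at this stage.

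Concretely, I would proceed as follows. Assume $A$ is $\JL_\CS$-valid, that is, true in every $\JL_\CS$-model. By Theorem \ref{thm:Sound Compl JL}, $A$ is then provable in the Hilbert-style calculus $\JL_\CS$. Invoking Theorem \ref{thm:completeness JL^e+cut}, $A$ admits a closed tableau proof in the extended system $\JL_\CS^e + (cut)$, where the cut rule may be applied without the $\CS$-subformula restriction imposed on $(PB)$ and $(PB_e)$. Finally, applying Theorem \ref{thm:Cut Elimination} to this proof removes every occurrence of the unrestricted $(cut)$, yielding a closed $\JL_\CS^e$-tableau beginning with $\neg A$, which by definition is a $\JL_\CS^e$-tableau proof of $A$.

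There is essentially no obstacle left to overcome: all the work has been done in the preceding theorems. The only thing worth flagging, if one wanted to be careful, is to observe that the chain genuinely produces a tableau in the restricted system (without cut) and that this tableau uses only the rules corresponding to the axioms of $\JL$, which is guaranteed by inspecting the cut-elimination transformations given in the proof of Theorem \ref{thm:Cut Elimination}, since each transformation keeps all surviving rule applications inside $\JL_\CS^e$. Hence the proof is a one-line composition of the three preceding theorems.
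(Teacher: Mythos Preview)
Your proposal is correct and matches the paper's own proof essentially verbatim: the paper also chains Theorem~\ref{thm:Sound Compl JL}, Theorem~\ref{thm:completeness JL^e+cut}, and Theorem~\ref{thm:Cut Elimination} in exactly this order.
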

\begin{proof}
If $A$ is $\JL_\CS$-valid, then by Theorem \ref{thm:Sound Compl JL} it is provable in $\JL_\CS$. Hence, by Theorem \ref{thm:completeness JL^e+cut}, it is provable in $\JL^e_\CS + (cut)$. Then, by the cut elimination theorem, it is provable in $\JL^e_\CS$. \qed
\end{proof}

Inspection of all $\JL^e_\CS$-tableau rules of Tables \ref{table:E-J} and \ref{table:E-tableau rules JL}   shows that the following subformula property holds.

\begin{theorem}[Subformula property]\label{thm:subformula property E-tableaux}
Every $\JL$-formula in a $\JL^e_\CS$-tableau proof is a weak $\CS$-subformula of the root of the tableau. Every $\JL$-formula in an evidential formula is a  $\CS$-subformula of the root of the tableau, and every term in an evidential formula occurs in the root or in a formula in $\CS$.
\end{theorem}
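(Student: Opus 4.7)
The plan is to prove this by induction on the number of applications of tableau rules in the $\JL^e_\CS$-tableau proof. For the base case, the tableau has a single node: the root $\neg F$. Trivially $\neg F \in Sub(\{\neg F\}\cup\CS)$, so the root itself is a weak $\CS$-subformula of the root; and since the root contains no evidential formula, the other two clauses are vacuous.

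For the inductive step, I would perform a case analysis on the rule applied at the last step and verify, under the inductive hypothesis that all formulas above satisfy the three clauses, that every newly introduced formula satisfies them as well. The unrestricted $\JL$-formula rules are all immediate: $(F\neg)$, $(F\r)$, $(T\r)$ each introduce formulas that are either subformulas of, or negations of subformulas of, the principal formula, and since the principal formula is already a weak $\CS$-subformula of the root by the induction hypothesis, so is each conclusion (using that $Sub$ is transitive, and that negating a subformula keeps one inside the weak $\CS$-subformula class). The rules $(Te)$ and $(Fe)$ turn $t:A$ or $\neg t:A$ into the evidential formulas $[t,A]$ or $\sim[t,A]$; by the induction hypothesis $A$ is a $\CS$-subformula of the root (being a subformula of $t:A$, which in turn is a subformula of a formula in $\{F\}\cup\CS$ possibly up to an outer negation, and in either case $A$ itself lies in $Sub(\{F\}\cup\CS)$), and the term $t$ occurs in the root or in $\CS$. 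The rules $(+_L)$, $(+_R)$, $(!)$, $(\bar?)$, $(?)$, $(e)$, $(e_\bot)$ all produce either evidential formulas whose $\JL$-part and term are already parts of the premise, or $\JL$-formulas which are subformulas of the $\JL$-part of the premise (so the claim is preserved by transitivity of $Sub$ and of the subterm relation).

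The restricted rules $(\cdot)$, $(PB)$, and $(PB_e)$ are designed precisely so that the subformula property is enforced by their side conditions: $(PB)$ introduces $A$ and $\neg A$ only when $A$ is a $\CS$-subformula of the root, so $A$ and $\neg A$ are both weak $\CS$-subformulas; $(PB_e)$ introduces $[t,A]$ and $\sim[t,A]$ only when $A$ is a $\CS$-subformula of the root and $t$ occurs in the root; $(\cdot)$ introduces $[s\cdot t, B]$ only when $A\r B$ is a $\CS$-subformula of the root (so $B$ is as well) and the term $s\cdot t$ occurs in the root. Each of these directly yields the desired conclusion.

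The main (though minor) obstacle is simply the bookkeeping: one must carefully track both the formula clause and the term clause separately across all roughly fifteen rules of Tables \ref{table:E-J} and \ref{table:E-tableau rules JL}. The only slightly subtle point is in the boolean rules, where one must use the \emph{weak} part of Definition \ref{def:CS-subformula} to accommodate the negated conclusion $\neg B$ of $(F\r)$ and $\neg A$ of $(T\r)$, even though the principal formula itself may already be a negation; this is why the definition of weak $\CS$-subformula is formulated as ``subformula or negation of a subformula'' rather than being iterated. Once the case analysis is complete, the three clauses of the theorem follow simultaneously by induction.
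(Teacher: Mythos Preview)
Your approach is correct and is exactly what the paper does: the paper's own ``proof'' consists of the single sentence ``Inspection of all $\JL^e_\CS$-tableau rules of Tables~\ref{table:E-J} and~\ref{table:E-tableau rules JL} shows that the following subformula property holds,'' and your induction on the number of rule applications with a rule-by-rule case analysis is precisely that inspection spelled out in detail. Your observation about why the \emph{weak} $\CS$-subformula notion is needed (to absorb the negated conclusions of $(F\r)$ and $(T\r)$ without iterating) is accurate and goes beyond what the paper makes explicit.
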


Note that for a finite constant specification $\CS$, the set of all $\CS$-subformulas of a formula is finite. Thus, in order to prove a given formula we need to search for a finite number of formulas. Furthermore, the number of applications of each tableau rule is finite too. In fact, the only tableau rules that increase the complexity of formulas or terms are $(PB)$, $(PB_e)$ and $(\cdot)$. For a finite constant specification, the set of all $\CS$-subformulas of a formula and the set of all terms occur in it are finite, and hence it is enough to apply $(PB)$ and $(PB_e)$ only finitely many times. Moreover,  the condition given in Table \ref{table:E-J} for the rule $(\cdot)$ avoids infinite applications of this rule. Thus we have

\begin{theorem}
Given any finite constant specification $\CS$ for $\JL$, ${\sf JL}^e_\CS$-tableaux always terminate. Therefore, justification logic $\JL_\CS$ is decidable.
\end{theorem}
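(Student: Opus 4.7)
My plan is to formalize the informal argument sketched just before the statement, first showing that every ${\sf JL}^e_\CS$-tableau is finite and then deriving decidability by exhaustive search. Fixing the root formula $\neg A$, I would invoke the subformula property (Theorem \ref{thm:subformula property E-tableaux}) to identify a finite ``universe'' of formulas that can ever appear: every $\JL$-formula in the tableau is a weak $\CS$-subformula of $A$, and every evidential formula $[t,B]$ or $\sim[t,B]$ has $B$ a $\CS$-subformula of $A$, with $t$ drawn from the finite pool of terms that occur in the root or in the formulas of $\CS$. Since $\CS$ is finite and $A$ is a single formula, both pools are finite sets.

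The next step is to bound the number of applications of each rule relative to this finite universe. The decomposition rules $(F\neg)$, $(F\r)$, $(T\r)$, $(Te)$, $(Fe)$, $(+_L)$, $(+_R)$, and all rules of Table \ref{table:E-tableau rules JL} strip off a connective or term constructor and yield a conclusion of strictly smaller rank than the premise, so under the standard convention that a rule is not reapplied to the same premise on the same branch each is triggered only finitely often. For the potentially problematic rules, the explicit side conditions in Table \ref{table:E-J} confine them to the finite universe: the $PB$-formula in $(PB)$ must be a $\CS$-subformula of the root; in $(PB_e)$ the formula $A$ must be a $\CS$-subformula and the term $t$ must occur in the root; and the conclusion of $(\cdot)$ requires $A \r B$ to be a $\CS$-subformula and $s \cdot t$ to occur in the root. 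Combined with the no-repetition convention, these restrictions bound the applications of $(PB)$, $(PB_e)$, and $(\cdot)$ by quantities computable from the size of the finite universe.

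Hence every branch is finite. Since each rule has branching degree at most two, K\"onig's lemma yields finiteness of the whole tableau, which is exactly termination. Decidability then follows by exhaustive search: to decide $\JL_\CS$-provability of $A$, one systematically enumerates all ${\sf JL}^e_\CS$-tableaux with root $\neg A$; by the soundness theorem and Theorem \ref{thm:completeness E-tableaux}, $A$ is $\JL_\CS$-provable iff some such tableau closes, and termination ensures this is a finite check. I expect the main subtlety to be the implicit no-repetition assumption for $(PB)$, $(PB_e)$, and $(\cdot)$: the rules as stated do not themselves prevent being applied twice with the same parameters on a branch, so a careful formalization requires making a saturation or fairness condition explicit, after which the termination bound is a straightforward counting argument over the finite universe identified in the first step.
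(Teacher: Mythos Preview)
Your proposal is correct and follows essentially the same argument the paper sketches in the paragraph preceding the theorem: finiteness of the $\CS$-subformula universe plus the side conditions on $(PB)$, $(PB_e)$, and $(\cdot)$ bound the search space, yielding termination and hence decidability via soundness and completeness. You are simply more explicit than the paper about the no-repetition convention and the appeal to K\"onig's lemma, which the paper leaves implicit.
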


Finally we present $\JL_\CS^e$-tableaux with signed formulas. In this case tableaux are constructed from singed formulas of the following forms:
\[
T~A,\quad F~A, \quad T~[t,A], \quad F~[t,A],
\]
where $A$ is a $\JL$-formula and $t$ is a term. Note that using singed formulas there is no need to  use the negation symbol $\sim$. All the results of this section can be adapted to $\JL_\CS^e$-tableaux with signed formulas.

\begin{table} 
\centering\renewcommand{\arraystretch}{1.5}
\begin{tabular}{|cc|}
\hline
\AXC{}\noLine 
\UIC{$F~\neg A$}\RightLabel{$(F\neg)$}
\UIC{$T~A$}\noLine
\UIC{}
\DP
&
\AXC{}\noLine 
\UIC{$T~\neg A$}\RightLabel{$(T\neg)$} 
\UIC{$F~A$}\noLine
\UIC{}
\DP
\\
\AXC{}\noLine 
\UIC{$F~A\rightarrow B$}\RightLabel{$(F\r)$}
\UIC{$T~A$}\noLine
\UIC{$F~B$}\noLine
\UIC{}
\DisplayProof
&
\AXC{$T A\rightarrow B$}\RightLabel{$(T\r)$}
\UIC{$F~A |T~B$}
\DP
\\\hline

\AXC{}\noLine\UIC{$T~t:A$}\RightLabel{$(Te)$} 
\UIC{$T~[t,A]$}\noLine
\UIC{}
\DP
&
\AXC{}\noLine\UIC{$F~t:A$}\RightLabel{$(Fe)$} 
\UIC{$F~[t,A]$}\noLine
\UIC{}
\DP
\\\hline
\AXC{}\noLine 
\UIC{$F~[t+s,A]$}\RightLabel{$(+_L)$}
\UIC{$F~[t,A]$}
\DisplayProof
&
\AXC{}\noLine 
\UIC{$F~[t+s,A]$}\RightLabel{$(+_R)$}
\UIC{$F~[s,A]$}
\DisplayProof
\\
\multicolumn{2}{|c|}{
\AXC{}\noLine
\UIC{}\noLine
\UIC{$T~[s,(A\rightarrow B)]$}\noLine
\UIC{$T~[t,A]$}\RightLabel{$(\cdot)$}
\UIC{$T~[s\cdot t,B]$}\noLine
\UIC{}
\DisplayProof
}
\\\hline
\AXC{}\noLine 
\UIC{}\noLine 
\UIC{} 
\RightLabel{$(PB)$}
\UIC{$T~A~|F~A$}\noLine
\UIC{}
\DisplayProof 
&
\AXC{}\noLine 
\UIC{}\noLine 
\UIC{} 
\RightLabel{$(PB_e)$}
\UIC{$T~[t,A]~|F~[t,A]$}\noLine
\UIC{}
\DisplayProof
\\\hline
\multicolumn{2}{|l|}{In $(\cdot)$ the formula $A \r B$ is a $\CS$-subformula of the root and}\\\multicolumn{2}{|l|}{ the term $s \cdot t$ occurs in the root.}\\\hline
\multicolumn{2}{|l|}{In $(PB)$ the $PB$-formula $A$ is a $\CS$-subformula of the root.}\\\hline
\multicolumn{2}{|l|}{In $(PB_e)$ the formula $A$ is a $\CS$-subformula of the root and}\\\multicolumn{2}{|l|}{ the term $t$ occurs in the root.}\\\hline
\end{tabular}\vspace{0.3cm}
\caption{Signed tableau rules for basic justification logic {\sf J}.}\label{table:Singed-E-J}
\end{table}

\begin{table}
\centering\renewcommand{\arraystretch}{2}
\begin{tabular}{|l|c|}
\hline
~Justification axiom & Tableau rule~ \\\hline
~{\bf jT}. $t:A\r A$ &
\AXC{}\noLine 
\UIC{$T~[t,A]$} \RightLabel{$(e)$}
\UIC{$T~A$}\noLine
\UIC{}
\DP \\\hline
~{\bf jD}. $t:\bot\r\bot$ &
\AXC{}\noLine 
\UIC{$T~[t,\bot]$} \RightLabel{$(e_\bot)$}
\UIC{$T~\bot$}\noLine
\UIC{}
\DP
\\\hline
~{\bf j4}. $t:A\r !t:t:A$ &
\AXC{}\noLine 
\UIC{$F~[!t,t:A]$} \RightLabel{$(!)$}
\UIC{$F~[t,A]$}\noLine
\UIC{}
\DP
\\\hline
~{\bf jB}. $\neg A\r \bar{?}t:\neg t:A$ &
\AXC{}\noLine 
\UIC{$F~[\bar{?}t,\neg t:A]$} \RightLabel{$(\bar{?})$}
\UIC{$T~A$}\noLine
\UIC{}
\DP
\\\hline
~{\bf j5}. $\neg t:A\r ?t:\neg t:A$ &
\AXC{}\noLine 
\UIC{$F~[?t,\neg t:A]$} \RightLabel{$(?)$}
\UIC{$T~[t,A]$}\noLine
\UIC{}
\DP
\\
\hline
\end{tabular}\vspace{0.3cm}
\caption{Justification axioms with corresponding signed
tableau rules.}\label{table:Signed-E-tableau rules JL}
\end{table}

\noindent
{\bf Acknowledgments}\\

This research was in part supported by a grant from IPM. (No. 95030416)


\end{document}